\documentclass[10pt]{amsart}

\usepackage{amssymb}
\usepackage{amsthm}
\usepackage{graphics}
\usepackage{amsmath}
\usepackage{amstext}

\usepackage{fancyhdr}

\usepackage{xfrac}
\usepackage{braket}
\usepackage{mathtools}
\usepackage{stmaryrd}
\usepackage{mathrsfs}
\usepackage{extarrows}

\usepackage{microtype}

\usepackage{graphicx}
\usepackage{caption}
\usepackage{subcaption}

\usepackage{todonotes}
\usepackage{import}

\usepackage{ esint }
\usepackage{abstract}

\usepackage{tikz}
\usetikzlibrary{matrix,arrows,calc,patterns}

%\textheight 27truecm 
%\textwidth 19truecm
%\oddsidemargin -0.5truecm
%\evensidemargin 0truecm
%\topmargin 0cm
%\topskip 0cm
%\voffset
%-1.5cm

%\usepackage{a4,amscd,graphicx,epsfig}
%\usepackage[all]{xy}
%\everymath{\displaystyle}

%\newenvironment{es}[1]{\ignorespaces\vspace{10pt}\par\noindent\textsc{Problema
%    #1.}}{\par\noindent\ignorespacesafterend}

%\newenvironment{proof}[1]{\ignorespaces\vspace{10pt}\par\noindent\textsc{Soluzione
%    #1.}}{\par\noindent\ignorespacesafterend}

%  \usepackage{showlabels}

\title{Bifurcations in the elementary Desboves family}

% \author{Fabrizio Bianchi and Johan Taflin}

 \thanks{The first author was partially supported by the ANR project LAMBDA,
 ANR-13-BS01-0002 and by the FIRB2012 grant ``Differential Geometry and Geometric Function Theory", RBFR12W1AQ 002.}

\newtheorem{teo}{Theorem}[section]
\newtheorem{defi}[teo]{Definition}
\newtheorem{cor}[teo]{Corollary}
\newtheorem{lemma}[teo]{Lemma}

\newtheorem{remark}[teo]{Remark}
\newtheorem*{mainthm*}{Main Theorem}

%\renewcommand{\arraystretch}{1.1} 

%una per le norme dei differenziali

%\renewcommand{\todo}[1]{\todo{\small #1}}
\usepackage{tikz}
\usetikzlibrary{matrix,arrows,calc,shapes,trees,positioning}
%\usetikzlibrary{positioning}
%\usetikzlibrary{decorations.text}
%\usetikzlibrary{decorations.pathmorphing}
%\newcommand{\C}{\mathbb{C}}
% \newcommand{\lam}{\lambda}

% \newcommand{\Per}{Per}

\def\b#1{\bar{#1}}
\def\t#1{\tilde{#1}}

\DeclareMathOperator{\jac}{Jac}

\usepackage{enumitem}
\newlist{MA}{enumerate}{1}
\setlist[MA]{label=I.\arabic*}

\newlist{Mintro}{enumerate}{1}
\setlist[Mintro]{label=A.\arabic*}

\newlist{MB}{enumerate}{1}
\setlist[MB]{label=II.\arabic*}

\renewcommand{\P}{\mathbb P}
\newcommand{\C}{\mathbb C}
\newcommand{\lam}{\lambda}
\newcommand{\Dd}{\mathcal D}
\newcommand{\Cc}{\mathcal C}
\newcommand{\pa}[1]{\left(#1\right)}
\newcommand{\Pp}{\mathcal P}
\newcommand{\abs}[1]{\left|#1\right|}
\newcommand{\Ee}{\mathcal E}
\renewcommand{\bra}[1]{\left[#1\right]}
\renewcommand{\bar}{\overline}

\begin{author}[F.~Bianchi]{Fabrizio Bianchi}
\address{ 
 Universit\'e de Toulouse - IMT\\
 UMR CNRS 5219\\
 31062 Toulouse Cedex \\
  France }
  \email{fabrizio.bianchi$@$math.univ-toulouse.fr}
\end{author}

\begin{author}[J.~Taflin]{Johan Taflin}
\address{ 
 Universit\'e de Bourgogne Franche-Comt\'e\\
 UMR CNRS 5584\\
 21078 Dijon Cedex \\
  France }
  \email{johan.taflin$@$u-bourgogne.fr}
\end{author}

\begin{document}

\maketitle

\begin{abstract}
We give an example of a family of endomorphisms of $\P^2(\C)$ whose Julia set depends continuously on the parameter
and whose bifurcation locus has non empty interior.
\end{abstract}

%\tableofcontents

\section{Introduction and result}

It is a classical result due to Lyubich \cite{lyubich1983some} and Man{\'e}-Sad-Sullivan \cite{mane1983dynamics}
that, for a family of rational maps,
the continuity of the Julia set (in the Hausdorff topology) with the parameter
is equivalent to the holomorphic motion of the repelling periodic points and of the Julia set itself.
This allows one to decompose the parameter space of the family into the \emph{stability locus} (where such
holomorphic motions exist)
and its complement, the \emph{bifurcation locus}. De Marco \cite{demarco2001dynamics}
considered the canonical
current $dd^c L$ on the parameter space (where $L$ is the Lyapounov function, given by the integration of the
logarithm of the Jacobian with respect to the equilibrium measure, i.e. the unique measure of maximal entropy)
and proved that it is exactly supported on the bifurcation locus. Thus, stability is equivalent to the harmonicity of
the Lyapounov function. An important feature of this picture is that the stability locus is dense in the parameter space.

This description was recently
extended
to higher dimension by Berteloot, Dupont, and the first author in \cite{bbd2015}. It is proved there
that the complex laplacian of the Lyapounov function
still detects the bifurcation of the repelling cycles and that, up to a zero measure subset with respect to the equilibrium measure,
the Julia
set moves holomorphically precisely out of the support of $dd^c L$. This lead to a coherent definition of stability
and bifurcation in general dimension.
Two main questions were left open in that work: the equivalence between the above stability conditions
and the Hausdorff continuity of the Julia set, and the possible density of stability.
The goal of this paper is to
present an example answering by the negative to both of the above questions. More precisely, we prove the following.

\begin{mainthm*}\label{teo_main}
 The family of endomorphisms of $\P^2$ given by
\[
 f_\lam= \bra{-x(x^3+2z^3) : y(z^3-x^3 +\lam (x^3 + y^3 + z^3)):z(2x^3+ z^3)}
 \]
with $\lam\in \C^*$
satisfies the following properties:
\begin{enumerate}
 \item the Julia set of $f_\lam$ depends continuously on $\lam$, for the Hausdorff topology;
 \item the bifurcation locus coincides with $\C^*$.
\end{enumerate}
\end{mainthm*}

The family above (referred to as the \emph{elementary Desboves family})
was studied by Bonifant-Dabija \cite{bd} and Bonifant-Dabija-Milnor \cite{bdm}, who revealed the dynamical richness
of these maps.
One of their main properties is that the small Julia set $J_2(\lam)$ of $f_\lam$ (the support of the equilibrium measure)
and its large Julia set $J_1(\lam)$ (the complement of the Fatou set, which coincides with
the support of the Green current) coincide (see Theorem \ref{teo_properties}). We can thus refer
to any of them as the \emph{Julia set}.
It would be interesting to know whether the bifurcations in our example are \emph{robust}.
Let us mention that Romain Dujardin has announced the existence of several mechanisms leading to
such phenomena.

The paper is organized as follows. In Section \ref{section_prelim_desboves} we describe in details the Desboves maps
and collect (in Theorem \ref{teo_properties}) the properties that we shall need in the construction, deducing in
particular the continuity of the Julia set and the
presence of bifurcations. This already proves that continuous and holomorphic dependence of the Julia set on the
parameter are not equivalent
in general dimension.
Then, in Section \ref{section_misiurewicz_dense} we prove that the bifurcation locus coincides with $\C^*$, by proving that
\emph{Misiurewicz parameters} (see Definition \ref{defi_misiurewicz}) are dense in the parameter space.
The appendix is devoted to
the proof of Theorem \ref{teo_properties}.
Since
in our particular setting
the proof can be simplified with respect
to the work by Bonifant, Dabija and Milnor,
we include it for sake of completeness.

 \section{Elementary Desboves maps}\label{section_prelim_desboves}
 
 In this section we define and give the main properties of the \emph{elementary Desboves maps}. See \cite{bd} and \cite{bdm}
 for a more detailed description.
 We start considering the classical Desboves map given by
 \[
 f_\Dd := \bra{x(y^3-z^3):y(z^3 -x^3): z(x^3-y^3)}.
 \]
 This is a rational map on $\P^2$, with non-empty indeterminacy locus. The important feature we are interested in is the following:
 the map $f_\Dd$ leaves invariant a pencil of elliptic curves and in particular the \emph{Fermat curve}
 $\Cc :=\{x^3+y^3+z^3=0\}$. We will now consider a well-chosen
 perturbation of $f_\Dd$, still leaving the curve $\Cc$ invariant. A natural way to do this is to consider the following
 \emph{Desboves family} (studied in \cite{bdm}),
 parametrized by $(a,b,c)\in \C^3$. We set $\Phi(x,y,z):= x^3+y^3+z^3$
 for convenience. The Desboves family is thus given by
 \[
 f_{a,b,c} := \bra{x(y^3-z^3 + a\Phi):y(z^3 -x^3 + b\Phi): z(x^3-y^3+ c\Phi)}.
 \]
 By construction, every such map leaves invariant the Fermat curve $\Cc$. Moreover, 
 $f_{a,b,c}$ is actually a well defined endomorphisms of $\P^2$ (i.e., its indeterminacy locus is empty) with $(a,b,c)$
 avoiding
 the union of the seven hyperplanes $abc(a+b+c)(a+1-b) (b+1-c) (c+1-a)=0$.
 
 We will now specialize the parameters in order to recover a last property that we shall need in the sequel: we want every element of
 our perturbation to preserve a pencil of lines (i.e., to be \emph{elementary}, see \cite{dabija_thesis}).
 This can be done by specializing the coefficients $a=-1$ and $c=1$. We thus get the following
 definition (we rename the parameter $b$ to $\lam$).

 \begin{defi}\label{defi_desboves_map}
 Let $\lam\in \C$. An \emph{elementary Desboves map} is a map of the form
 \[
 f_\lam= \bra{-x(x^3+2z^3) : y(z^3-x^3 +\lam (x^3 + y^3 + z^3)):z(2x^3+ z^3)}.
 \]
 The \emph{elementary Desboves family} is the holomorphic family $f:\C^*\times \P^2 \to \C^*\times \P^2$ given by
 $f(\lam,z):= (\lam,f_\lam(z))$.
 \end{defi}

 When $\lam\neq 0$, the map $f_\lam$
 is an endomorphism of $\P^2$.
 With $\lam=0$, the map $f_0$ has
 an indeterminacy locus
 equal to
 the point $\rho_0 :=[0:1:0]$. Moreover,
 we have $f_\lam^{-1} \pa{\{\rho_0\}}=\pa{\{\rho_0\}}$ for every $\lam\in\C^*$. So, $f$
 can also be seen as a family of
endomorphisms
 of $\P^2 \setminus \{\rho_0\}$, parametrized by $\lam\in\C$.
 
We denote by $\Pp$ the pencil of lines passing through the point $\rho_0$. For every $\lam\in \C$, $f_\lam$
preserves $\Pp$ and leaves invariant
the coordinate lines $X:=\{x=0\}, Y:=\{y=0\}, Z:=\{z=0\}$, and
 the Fermat curve $\Cc$.
The line $Y$ is transversal to $\Pp$. Hence, the action on $\Pp$ can be identified to the restriction $g$
of $f_\lam$ to $Y$, which is independent from $\lam.$ To be more precise, by construction
the restriction $h$ of $f_\lam$ to $\Cc$ is independent from $\lam$. It lifts to a map of the
form $z\mapsto \alpha z+\beta$ on $\C$ and a simple computation of the multiplier at a fixed point gives
that $\alpha=-2.$ Hence, $h$ has degree $4$ and is uniformly expanding. If $\pi\colon \P^2\setminus\{\rho_0\}\to Y$
denotes the projection along $\Pp$ on $Y$ then $\pi$ semiconjugates $g$ to $h,$ i.e. $\pi\circ h=g\circ\pi.$
Therefore, the map $g$ is a degree $4$ Latt\`es map. An explicit computation gives that, in the coordinate
$w:= \frac{x}{z}$, the map $g$ takes the form
\begin{equation}\label{eq_g}
w\mapsto -w \frac{w^3+2}{2w^3 +1}.
\end{equation}
It has $3$ critical points, all with double multiplicity, which are given by $[1:0:\omega^j]$,
where $\omega$ is a primitive third-root
of 1 and $j\in\{0,1,2\}$. It is worth pointing out that these three points are mapped by $g$ to the
three (fixed) points of intersections between $Y$ and $\Cc$, $[1:0:-\omega^j]$. Since the intersection of
$\Cc$ and $Y$ are transverse, and the dynamics is expansive on both curves, each of these points is repelling for $f_\lam,$ $\lam\in\C.$

In the next Theorem we collect the properties of the elementary Desboves maps
that we shall need in the sequel.

 \begin{teo}\label{teo_properties}\cite[Proposition 6.16]{bd}
 Let $\lam\in\C^*.$ If $\b R(\lam)$ denotes the closure of the repelling periodic points of $f_\lam$ then
  \begin{enumerate}
   \item $J_1(\lam) = J_2(\lam) = \b R(\lam)$;
   \item the Fatou set coincides with the basin of the superattracting fixed point $\rho_0=[0:1:0]$.
  \end{enumerate}
 \end{teo}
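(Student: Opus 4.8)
The plan is to combine the general inclusions valid for any endomorphism of $\P^2$ with the specific pencil structure recalled above. Writing $T$ for the Green current and $\mu=T\wedge T$ for the equilibrium measure, recall that $J_1=\Supp T$ and $J_2=\Supp\mu$. By the equidistribution of repelling cycles towards $\mu$ these cycles are dense in $J_2$, while every repelling cycle lies in $J_1$ (the iterates cannot be normal near an expanding point); this gives the chain $J_2\subseteq \b R(\lam)\subseteq J_1$. Hence the whole statement reduces to two assertions: (a) the Fatou set equals the basin $\mathcal B$ of $\rho_0$, and (b) $J_1\subseteq J_2$. Indeed (a) yields $J_1=\P^2\setminus\mathcal B$, and together with (b) it forces $J_1\subseteq J_2\subseteq \b R(\lam)\subseteq J_1$, so that all three sets coincide.

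For (a) I would argue by the Latt\`es base. Let $U$ be a Fatou component not containing $\rho_0$. Since $f_\lam^{-1}(\{\rho_0\})=\{\rho_0\}$, no point of $U$ is ever sent to $\rho_0$, so $f_\lam^n(U)\subseteq \P^2\setminus\{\rho_0\}$ and the semiconjugacy gives $\pi\circ f_\lam^n=g^n\circ\pi$ on $U$. As $\{f_\lam^n\}$ is normal on $U$ and $\pi$ is holomorphic, $\{g^n\circ\pi\}$ is normal on $U$. Choosing a local holomorphic section $s$ of the submersion $\pi$ through a point of $U$ and post-composing, $\{g^n\}=\{(g^n\circ\pi)\circ s\}$ is normal on a non-empty open subset of $Y$, contradicting the fact that the Latt\`es map $g$ has empty Fatou set. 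Thus every Fatou component contains $\rho_0$, so there is a single component, which is then the basin $\mathcal B$.

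The core is (b), equivalently that the open, totally invariant set $\P^2\setminus J_2$ is contained in $\mathcal B$. Here I would again exploit the fibration $\pi$ over the Latt\`es base. Over a $g$-periodic point $q\in Y$ the corresponding line $\ell_q\in\Pp$ is invariant under the return map $f_\lam^p$, which restricts to a rational self-map of $\ell_q\simeq\P^1$ fixing $\rho_0$ superattractingly. Using the description of the critical set of $f_\lam$ and of the invariant curves $X,Y,Z,\Cc$, one checks that the postcritical set of this return map is contained in $\mathcal B$ together with the uniformly expanding curves; consequently the return map has no attracting, parabolic or rotation domain other than the basin of $\rho_0$, so its Julia set is exactly $\ell_q\setminus\mathcal B$. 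This gives transverse expansion along a dense family of fibres and rules out a non-escaping disc in $\ell_q$ disjoint from $\Supp\mu$. Letting $q$ range over the dense set of periodic points of $g$ and passing to the limit should yield $\P^2\setminus J_2\subseteq\mathcal B$, hence $J_1\subseteq J_2$.

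The hard part will be exactly this last step: controlling the fibre dynamics uniformly enough to exclude a ``horizontal'' Fatou direction inside $J_1$, i.e.\ a part of $J_1$ where $T$ lives but $T\wedge T$ vanishes. The delicate point is to show that the postcritical set creates no extra non-repelling behaviour in the fibres; the rigidity coming from the invariant elliptic curve $\Cc$ (on which $f_\lam$ is the uniformly expanding map $h$) and from the Latt\`es base $g$ is what I expect to make this tractable, and is presumably what allows the simplification with respect to the general treatment in \cite{bd,bdm}.
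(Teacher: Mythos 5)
Your overall reduction (the chain $J_2(\lam)\subseteq \b R(\lam)\subseteq J_1(\lam)$, then (a) Fatou set $=$ basin of $\rho_0$ and (b) $J_1(\lam)\subseteq J_2(\lam)$) matches the paper's, and your argument for (a) via the semiconjugacy $\pi\circ f_\lam=g\circ\pi$ and the emptiness of the Fatou set of the Latt\`es map $g$ is viable; the paper gets (a) in one line from uniform expansion transverse to $\Pp$ outside the basin, which is the same mechanism. One point to repair in (a): normality of $\pa{g^n\circ\pi}_n$ on $U$ does not follow formally from normality of $\pa{f_\lam^n}_n$, because a limit map of the iterates may take the value $\rho_0$, where $\pi$ is undefined. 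You must first show that on a component $U$ not containing $\rho_0$ the limit maps avoid $\rho_0$: if some limit map hits $\rho_0$, then $U$ meets the basin $\mathcal B$, hence (identity theorem) $U\subseteq\mathcal B$, and $\mathcal B$ is connected because $f_\lam^{-1}(\rho_0)=\{\rho_0\}$, so $U$ would contain $\rho_0$ after all. This is a fixable gap.

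The core inclusion (b) is where the proposal breaks down, for two distinct reasons. First, your key intermediate claim --- that over a $g$-periodic point $q$ the return map on $\ell_q$ has postcritical set inside $\mathcal B$ and the expanding curves, hence no attracting, parabolic or rotation domain, hence fiber Julia set equal to $\ell_q\setminus\mathcal B$ --- is false. The invariant lines $X=\{x=0\}$ and $Z=\{z=0\}$ are themselves fibers of $\Pp$, over the $g$-fixed points $x_0$ and $z_0$, and by \eqref{eq_restr_x} the return map on $X$ is $t\mapsto(1+\lam)t+\lam t^4$, with multiplier $1+\lam$ at $t=0$. Whenever $\abs{1+\lam}\le 1$ this creates an attracting, parabolic or Siegel domain of the fiber map inside $X\setminus\mathcal B$; for $\lam=-1$ the fiber map is $t\mapsto -t^4$, whose critical orbit even lies on the expanding curve $Y$, so your hypothesis holds and the conclusion still fails (expansion of $g$ along $Y$ says nothing about the fiber direction). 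Indeed the remark closing Section \ref{section_prelim_desboves} stresses that such discs exist and lie \emph{inside} $J_2(\lam)$: fiber-wise Fatou behaviour is compatible with membership in $J_2$, which is exactly why a naive fiber-by-fiber approach cannot work. Second, the step you label ``the hard part'' is the actual content of the theorem, and your sketch contains no mechanism for it: even granting expansion in a dense family of fibers, that would at best make (two-dimensionally) repelling points dense in $\P^2\setminus\mathcal B$, giving $\P^2\setminus\mathcal B\subseteq\b R(\lam)$ and hence $J_1(\lam)=\b R(\lam)$; it would not place anything in $\Supp\mu=J_2(\lam)$, since in dimension $2$ repelling points may lie outside $J_2$. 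What the paper does instead, and what is missing from your proposal, is: (i) a covering argument (Lemma \ref{lemma_point_rep_curve_j2}) showing that for the repelling fixed point $r_0\in\Cc\cap Y$ the forward iterates of every small neighbourhood of $r_0$ cover $\P^2\setminus\{\rho_0\}$ --- using lines of the pencil, the emptiness of the exceptional set of $g$, and of the exceptional set of the restriction of $f_\lam$ to the invariant line through $r_0$ --- which certifies $r_0\in J_2(\lam)$; (ii) expansion of $f_\lam$ on the Fermat curve $\Cc$, making preimages of $r_0$ dense in $\Cc$, so $\Cc\subseteq J_2(\lam)$; and (iii) the equidistribution of $f_\lam^{-n}(\Cc)$ towards the Green current (legitimate since the exceptional set $\{\rho_0\}$ of Lemma \ref{lemma_exceptional} does not contain $\Cc$), which spreads this to $J_1(\lam)\subseteq J_2(\lam)$. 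Without an ingredient certifying membership in the support of $\mu$ and a way to propagate it to all of $J_1$, the proposal does not prove the theorem.
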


 The first statement in the Main Theorem
 is then a consequence of Theorem \ref{teo_properties}.
 
 \begin{cor}
  Let $\pa{f_\lam}_{\lam\in\C^*}$ be the elementary Desboves family. Then the Julia set depends
  continuously on $\lam$ for the Hausdorff topology.
 \end{cor}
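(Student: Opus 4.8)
The plan is to establish Hausdorff continuity at an arbitrary $\lam_0\in\C^*$ by proving separately that the set-valued map $\lam\mapsto J(\lam)$ is both lower and upper semicontinuous, where $J(\lam)$ denotes the common value $J_1(\lam)=J_2(\lam)=\bar R(\lam)$ furnished by Theorem~\ref{teo_properties}. Lower semicontinuity means that $J(\lam_0)$ cannot collapse, i.e. every point of $J(\lam_0)$ lies within $\varepsilon$ of $J(\lam)$ once $\lam$ is close to $\lam_0$; upper semicontinuity means that $J(\lam)$ cannot explode, i.e. $J(\lam)$ is contained in an $\varepsilon$-neighborhood of $J(\lam_0)$. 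The two descriptions of $J(\lam)$ in Theorem~\ref{teo_properties} feed exactly these two halves: the identification with $\bar R(\lam)$ yields the lower bound, while the identification of the Fatou set with a single attracting basin yields the upper bound.

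For lower semicontinuity I would invoke the persistence of repelling cycles. Any repelling periodic point $p$ of $f_{\lam_0}$ is a hyperbolic fixed point of an iterate $f_{\lam_0}^n$, so by the implicit function theorem it continues to a holomorphic family $\lam\mapsto p(\lam)$ of periodic points that remain repelling, hence remain in $J(\lam)$, for $\lam$ near $\lam_0$. Since repelling points are dense in $J(\lam_0)=\bar R(\lam_0)$, a compactness argument (cover the compact set $J(\lam_0)$ by finitely many small balls centered at repelling points and continue each center) shows that every point of $J(\lam_0)$ sits within $\varepsilon$ of $J(\lam)$ for $\lam$ close enough. This is the classical, general direction and uses nothing specific about the family.

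For upper semicontinuity I would exploit the second statement of Theorem~\ref{teo_properties}: the Fatou set of $f_\lam$ is exactly the basin of the superattracting fixed point $\rho_0=[0:1:0]$, which is fixed for every $\lam$. Being superattracting, $\rho_0$ admits a ball $B\ni\rho_0$ that is a uniform trapping region, $\overline{f_\lam(B)}\subset B$ for all $\lam$ near $\lam_0$, and every point of $B$ is then in the basin. Given $\varepsilon>0$, the set $K:=\{x\in\P^2 : d(x,J(\lam_0))\ge\varepsilon\}$ is a compact subset of the basin of $\rho_0$ for $f_{\lam_0}$, so by uniform convergence to $\rho_0$ on compact subsets of the basin there is an integer $N$ with $f_{\lam_0}^N(K)\subset B$. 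Joint continuity of $(\lam,x)\mapsto f_\lam^N(x)$, together with the compactness of $K$ and the openness of $B$, then gives $f_\lam^N(K)\subset B$, hence $K$ lies in the Fatou set of $f_\lam$, for all $\lam$ close to $\lam_0$. Equivalently $J(\lam)\subset\{x : d(x,J(\lam_0))<\varepsilon\}$, which is the claimed upper semicontinuity.

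Combining the two estimates gives $d_H(J(\lam),J(\lam_0))\to 0$ as $\lam\to\lam_0$. I expect the only substantive point to be the upper semicontinuity: this is precisely the step that fails for a general family, and its failure (in the stronger guise of holomorphic motion) is the entire point of the paper, so it must be powered by the rigid global structure of the Fatou set as a single superattracting basin supplied by Theorem~\ref{teo_properties}, rather than by any stability of $J$. The lower semicontinuity, by contrast, is automatic for every holomorphic family and requires no special feature of the elementary Desboves maps.
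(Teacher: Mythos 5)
Your proof is correct, and its two halves relate to the paper differently. The upper semicontinuity is in substance the paper's own argument: the paper phrases it as closedness of $\bigcup_{\lam}\{\lam\}\times J_2(\lam)$ in $\C^*\times\P^2$, placing any point of the basin of $\rho_0$ in an open set $U$ with $\b{f_{\lam_0}(U)}\subset U$ which remains trapping for nearby $\lam$; your version with the compact set $K$, the iterate $N$ and a uniform trapping ball $B$ around $\rho_0$ is the same mechanism, merely organized around an $\varepsilon$-neighborhood of $J(\lam_0)$ instead of a single point. The lower semicontinuity is where you genuinely diverge: the paper deduces it from the continuity of $\lam\mapsto\mu_\lam$ (the equilibrium measure), since the support of a continuously varying measure is automatically lower semicontinuous --- a fact valid for \emph{every} holomorphic family of endomorphisms of $\P^2$ \cite{ds_cime} --- whereas you continue repelling cycles via the implicit function theorem and use their density in $J(\lam_0)=\bar R(\lam_0)$. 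Your route works here, but your closing claim that it ``requires no special feature of the elementary Desboves maps'' is inaccurate in dimension $2$: the persisted point $p(\lam)$ is indeed repelling, yet for a general family of endomorphisms of $\P^2$ a repelling periodic point need not belong to the small Julia set $J_2(\lam)$ (repelling points lying outside the support of the equilibrium measure are precisely one of the difficulties addressed in \cite{bbd2015}). To place $p(\lam)$ in $J(\lam)$ you must invoke Theorem \ref{teo_properties}(1) at the parameter $\lam$, namely $\bar R(\lam)=J_2(\lam)$; so both of your directions in fact lean on the special structure of the family, and it is the paper's measure-theoretic argument for lower semicontinuity that is the truly family-independent one. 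What your approach buys in exchange is that it avoids any potential-theoretic input and makes transparent how each of the two statements of Theorem \ref{teo_properties} powers one of the two semicontinuities.
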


\begin{proof}
As the equilibrium measure depends continuously on $\lam,$ its support $J_2(\lam)$ varies lower semicontinuously
with the parameter for every family of endomorphisms of $\P^2$ (see \cite{ds_cime}).
 It is thus enough to prove that, in the elementary Desboves family, this dependence is also upper semicontinuous.
This is equivalent to prove that the set $\cup_\lam \{\lam\}\times J_2(\lam)$
  is closed in the product space $\C^* \times \P^2$. This immediately follows since its complement $\Omega$
  is the basin of attraction of $\C^* \times \{\rho_0\}$ in the product
  space, which is open. More precisely, if $(\lambda_0,z_0)\in\Omega$ then by Theorem
  \ref{teo_properties}, $z_0$ is in basin of $\rho_0$ for $f_{\lam_0}.$ Therefore, there exists an
  open subset $U$ of $\P^2$ included in this basin such that $\b{f_{\lam_0}(U)}\subset U$ and $z_0\in U$.
  Hence, if $V\subset\C^*$ is a small neighborhood
  of $\lam_0$ then $\b{f_\lam(U)}\subset U$ for all $\lam\in V,$ i.e. $V\times U\subset\Omega.$
  \end{proof}
 
 \begin{remark}
  In our setting, the point $\rho_0$
  has to be thought of as the point at infinity, and the Julia set shares a lot of similarities with the \emph{filled Julia set}
  of a polynomial map. In particular, being the Julia set $J_2(\lam)$ the complement of the ``basin at infinity''
  of $f_\lam$,
  the above proof is essentially the same of the classical
  fact that the filled Julia set depends upper semicontinuously on the parameter.
 \end{remark}

 We study now the critical set $C_\lam$ of the maps $f_\lam$. This is a curve in $\P^2$ of degree 9. More precisely,
 in
 homogeneous coordinates
 the Jacobian of $f_\lam$
 is equal to
 \[
\jac (f_\lam)= 8 (x^3 - z^3)^2 (-x^3 + z^3 + \lam (4y^3 + x^3 + z^3)).
 \]
 The term $(x^3 - z^3)$ gives the three lines $L_j$ passing thought $\rho_0$ and $[1:0:\omega^j]$, with $j\in\{0,1,2\}$, respectively,
 each with multiplicity $2$.
 This corresponds to 
 the critical
 set of
 the Latt\`es map $g.$
The other terms gives a 
(possibly non irreducible) curve of degree 3, given in homogeneous coordinates
by $C'_\lam := \{-x^3 + z^3 + \lam (4y^3 + x^3 + z^3)=0\}$.
We thus have $C_\lam =C'_\lam \cup \pa{L_0 \cup L_1 \cup L_2}$.
Since $C'_\lam$ has degree 3, it intersects $Y$ in three points.
These are given by the points $[x:0:z]$ with $(x,z)$ solution of
$(1+\lam)z^3+ (\lam-1)x^3=0$.
For $\lam=1$, we get the point $[1:0:0]= Z\cap Y$ (as a triple solution). Analogously, for $\lam=-1$
we get the point $[0:0:1]= X\cap Y$. Otherwise, we have the three points $[w:0:1]$, where $w^3 = \frac{1+\lam}{1-\lam}$.
So,
every point $w\in Y$
except the six points $[1:0:\pm \omega^j]$ (i.e., the critical points for the Latt\`es
map and their images, which are always repelling for $f_\lam$,
see above)
there exists a $\lam\neq 0$ such that $w\in C'_\lam \cap Y$.
Moreover, the description above implies the following observation.
\begin{lemma}\label{lemma_w_open}
 Let $V$ be an open subset of $\C$.
 Then the set $W:= \cup_{\lam\in V} C'_{\lam} \cap Y$
 is open in $Y$.
\end{lemma}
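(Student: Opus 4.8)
The plan is to realize $W$ as the preimage of $V$ under a single holomorphic map of $Y\cong\P^1$, so that openness becomes an immediate consequence of continuity. I would coordinatize $Y$ by $w:=x/z$, as in \eqref{eq_g}, so that a point of $Y$ is $[w:0:1]$ with $w\in\C$, together with the point $[1:0:0]$ corresponding to $w=\infty$.

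The crucial point is that the equation cutting out $C'_\lam$ on $Y$ (where $y=0$) is \emph{affine in $\lam$}: it reads $\lam(x^3+z^3)=x^3-z^3$, i.e. $\lam(w^3+1)=w^3-1$ in the coordinate $w$. Hence for each $w$ with $w^3\neq-1$ there is a \emph{unique} parameter for which $[w:0:1]\in C'_\lam$, namely $\lam=\psi(w):=\frac{w^3-1}{w^3+1}$, and this defines a holomorphic degree-three map $\psi\colon Y\to\P^1$. The three points $w^3=-1$ (that is, the images $[1:0:-\omega^j]$ of the critical points of $g$) are exactly the poles of $\psi$, in agreement with the fact recorded above that they lie on no $C'_\lam$ with $\lam\in\C$; the point $w=\infty$ maps to $\psi(\infty)=1$, consistently with $[1:0:0]\in C'_1$.

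It then remains to identify $W=\psi^{-1}(V)$. A point $w$ with $w^3\neq-1$ lies in $W$ iff the unique $\lam=\psi(w)$ realizing $w\in C'_\lam$ belongs to $V$, i.e. iff $\psi(w)\in V$; and the three points with $w^3=-1$ lie in neither set, since $\psi$ sends them to $\infty\notin V\subseteq\C$. As $\psi$ is continuous and $V$ is open (in $\C$, hence in $\P^1$), the set $W=\psi^{-1}(V)$ is open in $Y$, which is the assertion.

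The one place where care is needed — and essentially the only possible pitfall — is the direction of the correspondence between $w$ and $\lam$. For fixed $\lam$ the set $C'_\lam\cap Y$ consists of three points, so the incidence relation is three-to-one from $Y$ onto parameter space; what makes the argument work is instead that for fixed $w$ the value of $\lam$ is uniquely determined, which is precisely the affine-in-$\lam$ structure of the equation. This single-valuedness in $\lam$ is what collapses the union $\bigcup_{\lam\in V}$ into the clean preimage $\psi^{-1}(V)$, and once $\psi$ is viewed as a map into $\P^1$ the boundary cases $w^3=-1$ and $w=\infty$ are handled automatically.
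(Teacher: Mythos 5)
Your proof is correct and is essentially the paper's own argument made explicit: the paper states the lemma as an immediate consequence of the computation that on $Y$ the equation of $C'_\lam$ reads $(\lam-1)x^3+(\lam+1)z^3=0$, i.e.\ $w^3=\frac{1+\lam}{1-\lam}$, which is exactly your relation $\lam=\psi(w)$ read in the other direction. Packaging the conclusion as $W=\psi^{-1}(V)$ for the rational map $\psi(w)=\frac{w^3-1}{w^3+1}$, with the cases $w^3=-1$ and $w=\infty$ checked, is a clean and complete way to write down the observation the paper leaves implicit.
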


For $\lam=0$ the critical set is very special: it is reduced to the union of the three lines $L_i$, each with multiplicity 3. Thus,
the intersection of $C'_0$ with $Y$
is again given by the three points $[1:0:\omega^j]$.\\

Finally,
we study
the two intersection points $x_0:= [0:0:1] = X\cap Y$ and $z_0:= [1:0:0] =Z\cap Y$.
We focus on $x_0$, since the arguments are the same for $z_0$.
Since both the lines $X$ and $Y$ are fixed, the point $x_0$ is a fixed point. Since the restriction of $f$
to $Y$ is a Latt\`es map, the differential of $f$ at $x_0$ has one repelling eigenvalue.
To compute the other one, notice that the restriction of $f_\lam$ to $X$
has the form (in the coordinate $t:= \frac{y}{z}$)
\begin{equation}\label{eq_restr_x}
t\mapsto (1+\lam) t + \lam t^4.
\end{equation}
In particular, the point $\rho_0$ corresponds to the superattracting fixed point at infinity.
The differential at 0 (corresponding to $x_0$) being $(1+\lam)$, we get
that $x_0$ is a repelling fixed point for $\abs{1+\lam}>1$. The analogous computation on $z_0$
gives that $z_0$ is repelling if and only if $\abs{1-\lam}>1$. In particular,
\begin{equation}\label{eq_one_rep}
\mbox{for every $\lam\neq 0$
one among $x_0$ and $z_0$ is repelling.}
 \end{equation}

\begin{remark}
Observe that the transverse multiplier $1+\lam$ at $x_0$ can almost be chosen arbitrarily. If $1+\lam$ satisfies the Brjuno condition, the
 system $f_\lam$
 admits a \emph{Siegel disc} (see \cite{bbd2015})
 passing through $x_0$. By Theorem \ref{teo_properties}, this disc is contained in
 the support of the equilibrium measure. This answers to a question stated in \cite{bbd2015}. A different example of
 this phenomenon was given by the first author in \cite{tesi}. For other values of $\lam,$ it
 can give a parabolic petal or an attracting disc in $J_2(\lam).$ It is even possible to have a set of positive
 Lebesgue measure covered by such attracting discs, see \cite[Theorem 6.3]{bdm} and \cite{taf-elliptic}.
 In particular, the small Julia set has positive measure for these parameters.
\end{remark}

 \section{Misiurewicz parameters and bifurcations}\label{section_misiurewicz_dense}
 
 In this section we prove that the bifurcation locus of the elementary Desboves family coincides with the parameter
 space $\C^*$, completing
 the proof of the Main Theorem.
 By \cite{bbd2015}, \emph{Misiurewicz parameters} (as defined below) are dense in the bifurcation locus. We will thus
 prove that these parameter
 are dense in $\C^*$.
 Recall that $C_f$ and $C_\lam$ denote the critical set of the family $f$ and of $f_\lam$, respectively.
 We shall set $C^n_f:= f^n (C_f)$ and $C_\lam^n := f_\lam^n (C_\lam)$. 

\begin{defi}\label{defi_misiurewicz}
A parameter $\lambda_0 \in  \C^*$ is called a \emph{Misiurewicz parameter} if
there exist
a neighbourhood
$N_{\lambda_0} \subset \C^*$ of $\lambda_0$ and
a holomorphic map $\sigma \colon N_{\lam_0}\to \P^2$
 such that:
 \begin{enumerate}
 \item\label{defi_in_julia_rep} for every $\lambda\in N_{\lam_0}$, $\sigma(\lambda)$ is a
 repelling periodic point;
 \item\label{defi_in_julia} $\sigma(\lam_0)$ is in the Julia set
 of $f_{\lam_0}$;
 \item\label{defi_capta_critico} there exists an $n_0$ such that $(\lambda_0, \sigma (\lambda_0)) \in f^{n_0} (C_f)$;
 \item\label{defi_intersez} $\sigma(N_{\lambda_0}) \nsubseteq f^{n_0} (C_f)$.
 \end{enumerate}
\end{defi}

\begin{teo}
 Let $f$
be the elementary Desboves family. Then
 Misiurewicz parameters are dense in $\C^*$.
\end{teo}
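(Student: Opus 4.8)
The plan is to produce a Misiurewicz parameter inside an arbitrary nonempty open set $V\subseteq\C^*$, thereby establishing density. The whole construction takes place on the invariant line $Y=\{y=0\}$, on which $f_\lam$ restricts to the fixed Latt\`es map $g$ of \eqref{eq_g}, and the point we track will be one of the two special fixed points $x_0=[0:0:1]$, $z_0=[1:0:0]$. First I would reduce to a single fixed point: the sets $\{\abs{1+\lam}>1\}$ and $\{\abs{1-\lam}>1\}$ are open and, by \eqref{eq_one_rep}, cover $\C^*$, so at least one of them meets $V$. Shrinking $V$, I may assume it is $\{\abs{1+\lam}>1\}$, so that (by the eigenvalue computation: transverse multiplier $1+\lam$ from \eqref{eq_restr_x}, tangential multiplier $g'(0)=-2$) the point $x_0$ is a repelling fixed point of $f_\lam$ for every $\lam\in V$. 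Taking the constant section $\sigma\equiv x_0$, conditions \ref{defi_in_julia_rep} and \ref{defi_in_julia} of Definition~\ref{defi_misiurewicz} are then immediate: $x_0$ is repelling throughout $V$, and $x_0\in Y\subseteq J(f_\lam)$, since $Y$ is invariant, avoids $\rho_0$, and carries the expanding dynamics $g$, hence lies outside the basin of $\rho_0$, which by Theorem~\ref{teo_properties} is the Fatou set.

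Next I would realize $x_0$ as a forward image of a critical point over a parameter in $V$, giving condition \ref{defi_capta_critico}. By Lemma~\ref{lemma_w_open} the set $W:=\bigcup_{\lam\in V}C'_\lam\cap Y$ is a nonempty open subset of $Y$. Since $g$ is a Latt\`es map with Julia set equal to all of $Y$ and with empty exceptional set, the backward orbit $\bigcup_{n\ge 1}g^{-n}(x_0)$ is dense in $Y$; hence there exist $q\in W$ and $n_0\ge 1$ with $g^{n_0}(q)=x_0$ (and automatically $q$ avoids the six Latt\`es (post)critical points, since these never map to $x_0$ under $g$). Choosing $\lam_0\in V$ with $q\in C'_{\lam_0}\cap Y\subseteq C_{\lam_0}$, we get $(\lam_0,q)\in C_f$ and, $Y$ being invariant, $f^{n_0}(\lam_0,q)=(\lam_0,g^{n_0}(q))=(\lam_0,x_0)$, which is exactly \ref{defi_capta_critico}.

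The main obstacle is condition \ref{defi_intersez}: that $\sigma(N_{\lam_0})=N_{\lam_0}\times\{x_0\}$ is \emph{not} contained in $f^{n_0}(C_f)$. Equivalently, the incidence locus $A:=\{\lam\in\C^*:\ x_0\in f^{n_0}_\lam(C_\lam)\}$ should not contain a neighbourhood of $\lam_0$. As $A$ is the projection to $\C^*$ of the analytic set $f^{n_0}(C_f)\cap(\C^*\times\{x_0\})$, it is analytic, so it suffices to prove $A\neq\C^*$; then $A$ is discrete and $N_{\lam_0}$ can be shrunk so that $A\cap N_{\lam_0}\subseteq\{\lam_0\}$. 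The positive input is an explicit computation on $Y$: a point $[w:0:1]$ lies on $C_\lam$ iff $\jac(f_\lam)([w:0:1])=0$, i.e.\ (away from $w^3=1$) iff $\lam=(w^3-1)/(w^3+1)$, a \emph{single} parameter. Applied to the finitely many points of $g^{-n_0}(x_0)\subseteq Y$, this shows that the critical points lying on $Y$ contribute only finitely many parameters to $A$; in particular our $q$ lies on $C_\lam$ only for $\lam=\lam_0$, so the natural branch of the critical orbit through $q$ leaves $x_0$ at once.

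It then remains to exclude that a critical point \emph{off} $Y$ maps onto $x_0$ for a whole neighbourhood of parameters. This is the delicate point, and where I expect the real work to be. A persistent such coincidence would force an analytic branch $\lam\mapsto(\lam,w_\lam)$ of $C_f\setminus(\C^*\times Y)$ to lie inside the one-dimensional family $P:=\{(\lam,w):f^{n_0}_\lam(w)=x_0\}$ of preimages of $x_0$; since $P$ is a finite (multivalued) multisection of $\C^*$ while $C_f$ is a hypersurface cut out by $\jac(f_\lam)=0$, the requirement is that a whole branch of preimages of $x_0$ be critical for all $\lam$. I would rule this out by exhibiting (or establishing by a genericity argument) one parameter at which none of the preimages of $x_0$ is a critical point — a one-codimensional condition on $P$ that cannot hold identically — so that $A\neq\C^*$. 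This secures \ref{defi_intersez}, making $\lam_0\in V$ a Misiurewicz parameter; as $V$ was an arbitrary nonempty open subset of $\C^*$, such parameters are dense.
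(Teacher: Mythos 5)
Your handling of conditions \ref{defi_in_julia_rep}--\ref{defi_capta_critico} coincides with the paper's argument (a repelling fixed point $N\in\{x_0,z_0\}$ obtained from \eqref{eq_one_rep}, then a critical point in $W$ sent to $N$ via Lemma \ref{lemma_w_open} and the density of $g$-preimages), and your reduction of condition \ref{defi_intersez} to the single statement $A\neq\C^*$, using analyticity of $A$ and connectedness of $\C^*$, is sound. But the proof of $A\neq\C^*$ --- which you yourself flag as ``the delicate point'' --- is missing, and what you offer in its place is not an argument. The claim that criticality along a branch of $P$ is ``a one-codimensional condition on $P$ that cannot hold identically'' is circular: $P$ is itself one-dimensional, so a codimension-one condition can perfectly well hold identically on a component of $P$, and ruling this out is exactly the content of condition \ref{defi_intersez}. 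Your explicit computation on $Y$ (each $[w:0:1]$ with $w^3\neq 1$ is critical for at most one $\lam$) does not close the gap, because it only controls critical points lying on $Y$: a critical branch \emph{off} $Y$ whose $n_0$-th image is persistently $x_0$ is untouched by it, and nothing in your proposal produces the ``one parameter at which no preimage of $x_0$ is critical'' that your plan requires.

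The missing idea, which is the paper's Lemma \ref{lemma_non_persistent}, is a degeneration to $\lam=0$. Since $f_\lam^{-1}(\rho_0)=\{\rho_0\}$ for every $\lam\in\C$, the family can be viewed as a family of endomorphisms of $\P^2\setminus\{\rho_0\}$ over the parameter space $\C$, i.e.\ \emph{including} $\lam=0$. For $\lam=0$ the Jacobian becomes $-8(x^3-z^3)^3$, so $C_0$ is the union of the three lines $L_j$, and $f_0$ collapses each $L_j$ (off $\rho_0$) to the fixed point $[1:0:-\omega^j]$; hence $C_0^{n}\cap Y=\{[1:0:-\omega^j]\colon j=0,1,2\}$ for every $n\geq 1$, and in particular $x_0,z_0\notin C_0^{n}$. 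If one had $x_0\in C_\lam^{n_0}$ for every $\lam\in\C^*$ (which is what persistence amounts to, by your own analyticity reduction), the closedness of $C_f^{n_0}$ in $\C\times\pa{\P^2\setminus\{\rho_0\}}$ would force $x_0\in C_0^{n_0}$, a contradiction. This use of the degenerate parameter $\lam=0$ --- where the map is no longer an endomorphism of $\P^2$ but is still defined off $\rho_0$, and where the whole postcritical set is completely explicit --- handles all critical branches at once, on or off $Y$; it is the step your genericity appeal would have to replace, and working only inside $\C^*$ there is no evident candidate parameter to exhibit.
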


\begin{proof}
 Let $\lam_0\in \C^*$.
 If $V\subset \C^*$ is small enough open neighbourhood of $\lam_0$,
 by \eqref{eq_one_rep}
 one of the two points $[0:0:1]$ and $[1:0:0]$ is repelling.
 Call $N$ this point.
 By Lemma \ref{lemma_w_open}, the set $W:= \cup_{\lam\in V} C'_{\lam} \cap Y$ is open in $Y$.
 Since the preimages of $N$ by $g$ are dense in $Y$, there exists a $\lam_1\in V,$ a point $c_1\in Y$ and a number $n_1$
 such that:
 \begin{enumerate}
  \item $c_1$ is a critical point for $f_{\lam_1}$;
  \item $f_{\lam_1}^{n_1} (c_1) = N$.
 \end{enumerate}
Since $N$ is repelling (and thus contained in the Julia set, by Theorem \ref{teo_properties}),
to prove that $\lam_1$ is a Misiurewicz parameter
it is enough to check that $N$
is not persistently contained in $f^{n_1} (C_f)$. This follows from Lemma \ref{lemma_non_persistent}
below, and the proof is completed as $V$ can be chosen arbitrarily small.
\end{proof}

 \begin{lemma}\label{lemma_non_persistent}
  The points $x_0 = [0:0:1]$ and $z_0 =[1:0:0]$ are not persistently contained in any set $C_\lam^n$.
 \end{lemma}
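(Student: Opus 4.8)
The plan is to reduce the statement, via the pencil projection and the symmetries of $f_\lam$, to a one-dimensional question about a map that does \emph{not} depend on $\lam$, and then to produce a contradiction by degenerating the parameter. I treat $x_0$; the point $z_0$ is entirely analogous (with the value $0$ replaced by $\infty$ below).

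First I dispose of the linear part of the critical set. The three lines $L_j=\{x^3=z^3\}$ are the critical lines of the Latt\`es factor $g$, and $g$ sends the critical point $[1:0:\omega^j]$ to the fixed point $[1:0:-\omega^j]$; hence $f_\lam$ maps each $L_j$ onto the fixed line $L'_j$ through $\rho_0$ and $[1:0:-\omega^j]$. Thus $f_\lam^n(L_0\cup L_1\cup L_2)=L'_0\cup L'_1\cup L'_2$ for every $n\ge 1$, independently of $\lam$, so $C_\lam^n=f_\lam^n(C'_\lam)\cup L'_0\cup L'_1\cup L'_2$. Since $x_0=[0:0:1]$ and $z_0=[1:0:0]$ lie on none of the $L'_j$, membership of $x_0$ in $C_\lam^n$ is equivalent to membership of $x_0$ in $f_\lam^n(C'_\lam)$.

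Next I exploit the projection $\pi\colon[x:y:z]\mapsto[x:0:z]$ along the pencil, which satisfies $\pi\circ f_\lam=g\circ\pi$ with $g$ independent of $\lam$, together with the commuting symmetries $[x:y:z]\mapsto[\omega x:y:z]$ and $[x:y:z]\mapsto[x:\omega y:z]$. Passing to the quotient coordinates $(u,v,s)=(x^3,y^3,z^3)$, the map $f_\lam$ descends to $F_\lam[u:v:s]=[-u(u+2s)^3:v(s-u+\lam(u+v+s))^3:s(2u+s)^3]$, a morphism for $\lam\neq 0$; the cubic $C'_\lam$ descends to the \emph{line} $\ell_\lam=\{(\lam-1)u+4\lam v+(1+\lam)s=0\}$; and $[u:v:s]\mapsto[u:s]$ semiconjugates $F_\lam$ to the $\lam$-independent degree-$4$ map $G(\mu)=-\mu(\mu+2)^3/(2\mu+1)^3$ in the coordinate $\mu=u/s$. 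Writing $\bar x_0=[0:0:1]$ (so $\mu=0$), the question becomes whether $\bar x_0\in F_\lam^n(\ell_\lam)$ persistently.

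Now suppose this holds for $\lam$ in a nonempty open set. For any $\bar p\in\ell_\lam$ with $F_\lam^n(\bar p)=\bar x_0$ the orbit cannot meet the totally invariant point $[0:1:0]$, so the semiconjugacy forces $\mu(\bar p)\in G^{-n}(0)=:M_n$, a \emph{finite, $\lam$-independent} set. As $\ell_\lam$ meets each fibre line $\{u=\mu s\}$ in a single point $\bar p_\mu(\lam)$ depending rationally on $\lam$, persistence means that $\bigcup_{\mu\in M_n}\{\lam:F_\lam^n(\bar p_\mu(\lam))=\bar x_0\}$ contains an open set; each of these finitely many sets is the zero locus of a function holomorphic in $\lam$, so one of them must be all of $\C^*$, i.e. there is a fixed $\mu^*\in M_n$ with $F_\lam^n(\bar p_{\mu^*}(\lam))=\bar x_0$ for every $\lam$. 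To contradict this I let $\lam\to\infty$: the starting point $\bar p_{\mu^*}(\lam)$ has the finite limit $[\mu^*:-(\mu^*+1)/4:1]$, which, unless $\mu^*=-1$, avoids the locus $\{v=0\}\cup\{u+v+s=0\}$ where the $\lam^3$-term of the middle coordinate of $F_\lam$ vanishes; hence $F_\lam(\bar p_{\mu^*}(\lam))\to[0:1:0]$, and by the uniform contraction of $F_\lam$ near the superattracting point $[0:1:0]$ for large $\lam$ the whole image $F_\lam^n(\bar p_{\mu^*}(\lam))$ tends to $[0:1:0]\neq\bar x_0$, a contradiction. The one point that must be checked, and which I expect to be the crux, is the excluded value $\mu^*=-1$: it corresponds to the fixed points $[1:0:-\omega^j]=\Cc\cap Y$ of $g$, which satisfy $G(\,\cdot\,)=\mathrm{id}$ there, so $G^n(-1)=-1\neq 0$ and $-1$ never belongs to $M_n$. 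The delicate bookkeeping is thus entirely at the degeneration $\lam\to\infty$ and at this single fixed value of $\mu$.
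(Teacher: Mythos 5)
Your argument is correct, but it follows a genuinely different route from the paper's. The paper's proof degenerates at $\lam=0$: since $f_\lam^{-1}(\rho_0)=\{\rho_0\}$ for every $\lam$, the family extends to a family of endomorphisms of $\P^2\setminus\{\rho_0\}$ over all of $\C$; the proof then asserts that $C^{n_0}_f$ is closed, so that persistence would force $x_0\in C^{n_0}_0$, and concludes because $f_0$ contracts the three lines $L_j$ to the fixed points $[1:0:-\omega^j]$, so that $C^{n}_0\cap Y=\{[1:0:-\omega^j]\}$ for all $n\geq 1$. You never touch $\lam=0$: after disposing of the lines $L_j$ (whose forward images are the fixed lines $L'_j$, avoiding $x_0$ and $z_0$), you transfer the problem through the cube quotient, where $C'_\lam$ becomes the line $\ell_\lam$; the $\lam$-independent factor $G$ confines every candidate preimage of $\bar x_0$ to the finitely many fibres over $G^{-n}(0)$; the identity theorem upgrades persistence to a single branch $\bar p_{\mu^*}(\lam)$ valid on all of $\C^*$; and the degeneration $\lam\to\infty$ gives the contradiction, since that branch is captured by the superattracting point $[0:1:0]$, the only dangerous fibre $\mu^*=-1$ being excluded because $G(-1)=-1$. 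All of these steps check out, also for $z_0$, where the extra fibre $\mu=\infty$ yields the harmless limit $[-4:1:0]$. Comparing what each approach buys: the paper's proof is only a few lines, but its closedness claim is delicate, because the family map on $\C\times\pa{\P^2\setminus\{\rho_0\}}$ has an indeterminacy at $(0,\rho_0)$ and is not proper near it; indeed, taking $p_t=[t:1:t(1+ct)]$ and $\lam(t)$ chosen so that $p_t\in C'_{\lam(t)}$ (hence $\lam(t)\sim-\tfrac{3c}{4}t^4$), one computes $f_{\lam(t)}(p_t)\to[-1:\tfrac{3c}{4}:1]$, so the closure of $C^1_f$ over $\lam=0$ contains the whole lines $L'_j$ and not only the three points of $C^1_0$. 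The paper's conclusion survives because these lines still avoid $x_0$ and $z_0$, but as written that step needs an additional argument, whereas your $\lam\to\infty$ degeneration only ever takes limits of a single holomorphically varying point, so no properness issue can arise; the price is a noticeably longer proof.
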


 \begin{proof}
 Since for every $\lam\in\C$ we have $f_\lam^{-1} (\rho_0) = \{\rho_0\}$,
  we can see the family as a family of endomorphisms of $\P^2 \setminus \{\rho_0\}$, with parameter space $\C$.
  We still denote by $C^n_\lam$ and $C^n_f$
  the iterates of the critical sets.
  
  Assume, for sake of contradition, that there exists a $n_0$ such that $x_0 \in C_{\lam}^{n_0}$
  for every $\lam\in \C^*$. Since $C^{n_0}_f$ is closed, 
  this would imply that $x_0\in C_0^{n_0}$.
  But, for every $n\geq 1$, we have $C_0^{n} \cap Y= \{ [1:0:-\omega^j]\colon j=0,1,2\}$. This gives the desired contradiction.
 \end{proof}

 \begin{remark}
 Theorem 4.6 in \cite{bbd2015} states that, in any holomorphic family $f$ of endomorphisms of $\P^k$,
 the set of parameters $\lam$ for which $\P^k$ coincides with the closure of the postcritical set of $f_\lam$
 is dense in any open subset of the bifurcation locus. It follows that, in the elementary Desboves family, this
 property holds for a dense subsets of parameters in $\C^*$.
 \end{remark}

 \begin{remark}
  It follows from \cite{bbd2015} that, for a dense subset of parameters $\lam\in\C^*$,
  the map $f_\lam$ admits a Siegel disc contained in the support of the equilibrium measure.
 \end{remark}

 \appendix
 \section{Proof of Theorem \ref{teo_properties}}\label{section_appendix}
 
 This section consists of a simplified proof, in our setting, of Theorem \ref{teo_properties}.
 The original proof can be found in \cite{dabija_thesis,bd,bdm}.
 
 \begin{lemma}\label{lemma_exceptional}
  For every $\lam\neq 0$, the exceptional $\Ee$ set of $f_\lam$ is reduced to the point $\rho_0$.
 \end{lemma}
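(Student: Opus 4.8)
The plan is to show that the exceptional set $\Ee$ of $f_\lam$, which is the largest completely invariant proper algebraic subset, is exactly $\{\rho_0\}$. First I would recall that $\rho_0$ itself is genuinely exceptional: since $f_\lam^{-1}(\rho_0)=\{\rho_0\}$ for every $\lam\in\C^*$, the point is totally invariant, so $\{\rho_0\}\subset\Ee$. The work is therefore to prove the reverse inclusion, i.e.\ that $\Ee$ contains nothing else. The standard structural fact for endomorphisms of $\P^2$ is that the exceptional set is a finite union of points and curves, each component of which is periodic under $f_\lam$ and, being completely invariant as a set, must be permuted among a finite collection of such pieces.

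Next I would exploit the rigidity coming from the invariant curves and the Latt\`es structure already described in the paper. The coordinate lines $X$, $Y$, $Z$ and the Fermat curve $\Cc$ are all invariant, and on $Y$ (hence on $\Cc$ via the semiconjugacy) the dynamics is a degree-$4$ Latt\`es map, which is uniformly expanding with Julia set all of $Y$. A Latt\`es map has no exceptional points, so no point of $Y$ can lie in $\Ee$; the same holds on $\Cc$. This immediately rules out any isolated point of $\Ee$ lying on these curves, and in particular the candidate fixed points $x_0$ and $z_0$ cannot be exceptional because they sit on $Y$ where the dynamics is expanding with full Julia set. The remaining possibility is that some invariant \emph{curve} belongs to $\Ee$. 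To exclude this I would use that an exceptional curve must be totally invariant and its complement hyperbolically embedded or, more concretely for $\P^2$, that the possible exceptional curves are severely constrained (a line, a union of at most three lines, or a conic in special position).

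The main obstacle I anticipate is ruling out that one of the invariant lines $X$, $Y$, $Z$ — or the whole configuration $X\cup Y\cup Z$ — belongs to $\Ee$. Here the key is a counting argument on preimages: a curve in the exceptional set must have \emph{all} its preimages contained in $\Ee$, so $f_\lam^{-1}$ of such a curve can only produce curves already listed. I would compute the total preimage of each invariant line and check that it strictly increases the collection, contradicting complete invariance. For instance, since $\deg f_\lam = 4$, the preimage of a generic line is a curve of degree $4$; the preimage of an invariant line splits, but the explicit form of $f_\lam$ shows that lines such as $X$ or $Z$ pull back to include components meeting $Y$ transversally in non-exceptional points, forcing those components outside any putative finite invariant family. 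Because every point of $Y$ is in the Julia set of the expanding Latt\`es map, no curve whose preimage sweeps across $Y$ can be exceptional.

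Combining these observations, the only candidate that survives is the single point $\rho_0$: it is totally invariant, isolated, off the curves where the dynamics is expanding, and its removal from $\Ee$ is impossible by the first step. I would therefore conclude $\Ee=\{\rho_0\}$. The delicate part of writing this up cleanly is to phrase the exclusion of exceptional curves without invoking more than the invariance and Latt\`es facts already available in the excerpt, so I would lean on the explicit Jacobian and the explicit description of the restrictions $g$ on $Y$ and $h$ on $\Cc$ to keep the preimage computation elementary.
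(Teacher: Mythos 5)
Your proposal has a genuine gap: nothing in it rules out \emph{isolated} exceptional points lying off the invariant curves. The exceptional set is the largest proper algebraic totally invariant subset, so a priori its zero-dimensional part could sit anywhere in $\P^2$, not only on $X$, $Y$, $Z$ or $\Cc$. Neither of your two mechanisms reaches such points: the Latt\`es argument applies only to points that already lie on $Y$ (or $\Cc$), and the preimage-counting argument applies only to curves. Yet your conclusion (``the only candidate that survives is the single point $\rho_0$'') treats these as the only possibilities. This missing case is exactly where the paper exploits the structure you left unused: $f_\lam$ preserves the pencil $\Pp$ of lines through $\rho_0$, so the projection $\pi\colon\P^2\setminus\{\rho_0\}\to Y$ along $\Pp$ semiconjugates $f_\lam$ to the Latt\`es map $g$. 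If $e\in\Ee\setminus\{\rho_0\}$ and $y\in g^{-1}(\pi(e))$, then the line of $\Pp$ through $y$ is mapped onto the line of $\Pp$ through $e$, so $e$ has a preimage on that line; this preimage lies in $f_\lam^{-1}(\Ee)=\Ee$ and is distinct from $\rho_0$ (since $f_\lam(\rho_0)=\rho_0\neq e$). Hence $\pi(\Ee\setminus\{\rho_0\})$ is a finite backward-invariant subset of $Y$ for $g$, which must be empty because $g$ has no exceptional points. This disposes of the whole zero-dimensional part in one stroke; without it, or some substitute, your argument does not conclude.

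Two smaller points. First, the classification you invoke is inaccurate: a totally invariant curve in $\P^2$ is a union of at most three lines, and a conic is never totally invariant. In any case the paper avoids the classification altogether: it uses the restriction of $f_\lam$ to $X$, the degree-$4$ polynomial $t\mapsto(1+\lam)t+\lam t^4$ whose exceptional point is $\rho_0$, to force any one-dimensional component of $\Ee$ to meet $X$ only at $\rho_0$, and then discards such a component because it would meet $Y$ in a point of $\Ee\cap Y$, again contradicting the emptiness of the exceptional set of $g$. Second, your claim that ``no point of $Y$ can lie in $\Ee$'' implicitly assumes $Y\not\subset\Ee$ (otherwise $\Ee\cap Y=Y$ is infinite and backward invariance of a finite set gives nothing), while you only exclude the possibility of $Y$ being an exceptional curve afterwards; this circularity is fixable by running the curve exclusion first, but as written the logical order is backwards.
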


 \begin{proof}
  Since the point $\rho_0$ is backward invariant, it is contained in the exceptional set.
  Let us prove
  the other inclusion.
 The intersection of $\Ee$ with the invariant line $X=\{x=0\}$
  must be reduced to $\rho_0$ (the exceptional set for the restriction, by \eqref{eq_restr_x}).
  So, if $\Ee$ were of dimension 1, it had to be a line passing thought $\rho_0$. This is excluded since
   the exceptional set of
the
Latt\`es restriction $g$ to $Y$ is empty.
Once we know that $\Ee$ has dimension 0, the fact that is reduced to $\rho_0$ again follows since the exceptional set of $g
$ is empty.
\end{proof}

 \begin{lemma}\label{lemma_point_rep_curve_j2}
   There exists a repelling point $r_0$ on the curve $\Cc$ contained in the small Julia set $J_2(\lam)$.
 \end{lemma}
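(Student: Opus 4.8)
The plan is to take $r_0=[1:0:-\omega^j]$ (for any fixed $j$), which the discussion preceding Theorem~\ref{teo_properties} already identifies as a repelling fixed point of $f_\lam$ lying on $\Cc$: it is the transverse intersection point $Y\cap\Cc$, and both of its multipliers have modulus $\sqrt{4}=2$. So the only thing to prove is that $r_0\in J_2(\lam)=\Supp(T\wedge T)$, where $T$ is the Green current of $f_\lam$ and $\mu=T\wedge T$ the equilibrium measure. Two standing facts are used. First, both invariant curves through $r_0$ lie in $J_1(\lam)=\Supp(T)$: since $h=f_\lam|_\Cc$ and $g=f_\lam|_Y$ are Latt\`es maps, their Julia sets are all of $\Cc$, resp.\ $Y$, so no point of $\Cc\cup Y$ can be Fatou. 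Second, since $r_0$ is a non-critical (repelling) fixed point on the smooth invariant curve $\Cc$, the local branch of $f_\lam^{-n}(\Cc)$ through $r_0$ is $\Cc$ itself for every $n$ (and likewise for $Y$); thus $\Cc$ and $Y$ persist as a transverse pair of leaves through $r_0$ in every preimage generation, while the remaining sheets of $f_\lam^{-n}(\Cc)$ and $f_\lam^{-n}(Y)$ accumulate onto $r_0$ as $n\to\infty$.

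The mechanism I would exploit is that $T$ carries non-trivial trace along both transverse directions at $r_0$. Consider the slice measures $T\wedge[\Cc]$ and $T\wedge[Y]$, which are well defined because $T$ has continuous local potentials and neither curve is polar. The local potential of $T$ is the dynamical Green function $G$, satisfying $G\circ f_\lam=d\,G$; its restriction to $\Cc$ is therefore the Green function of $h$, whose Laplacian on the curve is the maximal measure $\nu$ of $h$ — the Haar measure on the torus $\Cc$, of full support. Hence $T\wedge[\Cc]=\nu$ has support all of $\Cc$, and symmetrically $\Supp(T\wedge[Y])=Y$, so transversally to each curve $T$ puts mass arbitrarily close to $r_0$. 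To convert this into positivity of the self-intersection I would approximate $T$ by the averaged pullbacks $d^{-n}(f_\lam^n)^*[Y]\to T$ and $d^{-n}(f_\lam^n)^*[\Cc]\to 3T$ and control their geometric intersection near $r_0$: the aim is to show that the number of intersection points of $f_\lam^{-n}(Y)$ with $f_\lam^{-n}(\Cc)$ inside any fixed neighbourhood of $r_0$ grows like $d^{2n}$, so that in the limit $T\wedge T=\tfrac13\lim d^{-2n}\,(f_\lam^n)^*[Y]\wedge (f_\lam^n)^*[\Cc]$ charges every neighbourhood of $r_0$, i.e.\ $r_0\in\Supp(T\wedge T)=J_2(\lam)$.

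The main obstacle is exactly this last step: passing from ``both traces have full support at $r_0$'' to ``$T\wedge T\neq 0$ at $r_0$'' requires ruling out cancellation between the cohomological and the geometric intersection of the approximating curves, i.e.\ a laminarity / no-loss-of-mass statement for $T$ in the spirit of Dinh--Sibony (or Bedford--Lyubich--Smillie). Equivalently, one must show that the $\sim 3d^{2n}$ B\'ezout intersection points of $f_\lam^{-n}(Y)$ and $f_\lam^{-n}(\Cc)$ do not all escape from a neighbourhood of $r_0$. The delicacy is that the persistence of $r_0$ itself contributes only $d^{-2n}\delta_{r_0}\to 0$, so the required mass must come from genuinely many nearby intersections; this is where the expanding Latt\`es geometry of $h$ and $g$, together with the accumulation of the remaining preimage sheets onto $r_0$, has to enter. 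Once $r_0\in J_2(\lam)\cap\Cc$ is established, the payoff is immediate (and is what the later argument needs): total invariance of $J_2(\lam)$ and density of the $h$-preimages of $r_0$ in $\Cc$ give $\Cc\subseteq J_2(\lam)$, and then $d^{-n}(f_\lam^n)^*[\Cc]\to 3T$ forces $J_1(\lam)\subseteq J_2(\lam)$, closing the proof of Theorem~\ref{teo_properties}.
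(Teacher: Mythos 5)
Your proposal has the right point $r_0$ (a transverse intersection of $Y$ and $\Cc$, repelling because the dynamics is expanding along both invariant curves), and the first supporting fact you cite is correct: the slice $T\wedge[\Cc]$ is, up to normalisation, the maximal measure of the Latt\`es restriction $h$, hence has full support in $\Cc$, and similarly for $Y$. But the proof has a genuine gap exactly where you flag it yourself: nothing bridges from ``the traces of $T$ along the two invariant curves charge every neighbourhood of $r_0$'' to ``$T\wedge T$ charges every neighbourhood of $r_0$''. The support of a wedge product is in general strictly smaller than the intersection of the supports of its factors; indeed this discrepancy is the entire content of the lemma, since $r_0\in J_1(\lam)=\Supp T$ is obvious, while $r_0\in J_2(\lam)=\Supp(T\wedge T)$ is what must be shown. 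Your proposed remedy --- showing that $\sim d^{2n}$ of the B\'ezout intersection points of $f_\lam^{-n}(Y)$ and $f_\lam^{-n}(\Cc)$ stay in a fixed neighbourhood of $r_0$, via a laminarity or no-cancellation statement in the spirit of Bedford--Lyubich--Smillie or Dinh--Sibony --- is not carried out, and no such statement is available off the shelf for endomorphisms of $\P^2$. As you note, the persistent intersection at $r_0$ itself contributes only $d^{-2n}\delta_{r_0}\to 0$, so the needed mass must come from many nearby intersections, which you do not produce. As written, the proposal is a plan whose crucial step is open.

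For comparison, the paper avoids currents entirely and exploits the pencil $\Pp$ of lines through $\rho_0$. It shows that for every small neighbourhood $U$ of $r_0$ one has $\P^2\setminus\cup_{n\geq 0}f_\lam^n(U)=\{\rho_0\}$: if the (closed in $\P^2\setminus\{\rho_0\}$, backward invariant) complement $E$ were nonempty, one takes a line $l_0\in\Pp$ meeting $E$; since the Latt\`es map $g$ on $Y$ has empty exceptional set, preimages of $l_0$ accumulate on the invariant line $l_r$ through $\rho_0$ and $r_0$, producing a point $\tilde{e}\in E\cap l_r$ with $\tilde{e}\neq\rho_0$; but since $r_0$ is a repelling fixed point of $f_\lam|_{l_r}$, the forward images of $U\cap l_r$ cover $l_r$ minus the exceptional set of $f_\lam|_{l_r}$, which an explicit computation shows is $\{\rho_0\}$ --- a contradiction. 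Given this covering property, any ball $V$ meeting $J_2(\lam)$ and avoiding $\rho_0$ has a preimage meeting $U$, and total invariance plus closedness of $J_2(\lam)$ force $r_0\in J_2(\lam)$. This elementary argument completely replaces the missing intersection-theoretic step in your proposal.
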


 \begin{proof}
  Since the intersection of $\Cc$ and $Y$
  are transverse, and the dynamics is expansive of both curves, every intersection point
 of $\Cc$ and $Y$ 
 is repelling. Take as $r_0$ any such point (e.g., the fixed point $[1:0:-1]$).
 In order to prove that $r_0\in J_2(\lam)$ we prove that, given any sufficiently small open neighbourhood $U$ of $r_0$, we have
 \begin{equation}\label{eq_claim_key}
 \P^2 \setminus \cup_{n\geq0} f_\lam^n (U) = \{\rho_0\}.
 \end{equation}
 Indeed, take any small ball $V$ disjoint from $\rho_0$ and intersecting the small Julia set $J_2(\lam)$. By \eqref{eq_claim_key},
 preimages of $V$ must accumulate $r_0$. Since $J_2(\lam) \cap V\neq \emptyset$, this proves that $r_0\in J_2(\lam)$, as desired.
 
 Let us thus prove the claim \eqref{eq_claim_key}. Suppose
 that $E:= \P^2 \setminus\pa{\{\rho_0\} \cup \cup_n f_\lam^n (U)} \neq \emptyset$ and
 take a line $l_0$
 through $\rho_0$ intersecting $E$ in a point $e_0$. Since the exceptional set of
 the Latt\`es restriction of $f_\lam$ to $Y$ is empty, there exists a sequence of preimages of the line $l_0$
 converging to the line $l_r$ through $\rho_0$ and $r_0$. This gives a sequence of preimages $e_n$
 of $e_0$ that, up to a subsequence, converge to a point $\t e \in l_r$. Since $f_\lam^{-1} (E)\subset E$ and $E$
 is closed in $\P^2\setminus \{\rho_0\}$, we have
 $\t e \in l_r \cap E$. Moreover, $\t e\neq \rho_0$ since $\rho_0$ is attracting.
  On the other hand, the point $\t e$ must be contained in the exceptional set of the
  restriction of $f_\lam$ to $l_r$. An explicit computation
  yields that this exceptional set is reduced to $\rho_0$ and gives the desired contradiction.
 \end{proof}

 We can now prove Theorem \ref{teo_properties}.
 \begin{proof}[Proof of Theorem \ref{teo_properties}]
  The first assertion is a consequence of Lemma \ref{lemma_point_rep_curve_j2}. 
 Indeed, since backward preimages (in $\Cc$) of any point in $\Cc$ are dense in this curve, this implies
 that all the curve $\Cc$ is included in the Julia set $J_2(\lam)$. Since the exceptional set $\rho_0$ (see Lemma \ref{lemma_exceptional})
 is not contained in $\Cc$,
 preimages of this curve equidistribute (see \cite{ds_cime}) the Green current (supported
 on the large Julia set $J_1(\lam)$).
 This gives that the preimages of $r_0 \in J_2(\lam)$ are dense in $J_1(\lam)$, yielding $J_1(\lam) \subseteq J_2(\lam)$.
 The assertion follows since the inclusions $J_2(\lam) \subseteq \b R(\lam) \subseteq J_1(\lam)$ always hold (the first by Briend-Duval
 equidistribution result \cite{BriendDuval1}).
  
The equality between the Fatou set and the basin
$\Omega$ of $\rho_0$ simply comes from the fact that $f_\lam$ is uniformly
expanding in the transverse direction to $\Pp$ in $\P^2\setminus\Omega.$
\end{proof}

\bibliography{bib_bif_desboves.bib}{}
\bibliographystyle{alpha}

\end{document}